\documentclass{article}

\usepackage{cite}
\usepackage{amsmath,amssymb,amsfonts}
\usepackage{graphicx}
\usepackage{textcomp}

\pagestyle{empty}







\usepackage{geometry}
\geometry{margin=1in} 
\usepackage{xcolor}

\usepackage{cite}
\usepackage{amsmath,amssymb,amsfonts}
\usepackage{graphicx}
\usepackage{textcomp}

\usepackage{mathtools}
\usepackage{relsize}
\usepackage{tikz}


\usepackage{tabularx} 
\usepackage{graphicx} 
\usepackage{cite} 
\usepackage[final]{hyperref} 
\usepackage{amsmath} 
\usepackage{amssymb}  
\usepackage{mathtools, cuted,bm}
\usepackage{etoolbox}
\patchcmd{\proof}{\indent}{}{}{}
\usepackage{caption} 
\usepackage{tabularray}

\usepackage[justification=centering]{caption}

\newcommand{\twodots}{.\kern-0.1em.}
\newcommand{\myldots}{\kern-0.05em.\kern-0.01em.\kern-0.01em.\kern0.01em}

\usepackage{caption}
\usepackage{subcaption}
\usepackage{esvect}
\usepackage{lipsum, color}

\newcommand{\U}{\mathbb{U}}
\newcommand{\Y}{\mathbb{Y}}
\newcommand{\y}{\mathrm{y}}

\newcommand{\Hy}{H^{\mathrm{y}}}
\newcommand{\hy}{h^{\mathrm{y}}}

\newcommand{\R}{\mathbb{R}}
\newcommand{\I}{\mathbb{I}}

\newcommand{\fe}{\mathsf{f}}
\newcommand{\ve}{\mathsf{v}}

\usepackage{booktabs}

\usepackage{relsize}
\usepackage{tikz}
\graphicspath{{./Figures/}} 

\usepackage[margin=0.8cm]{caption}
\usepackage{pgfplots}
\pgfplotsset{compat=newest}
\usepackage{tabularx}
\pgfplotsset{plot coordinates/math parser=false}

\usepackage{ulem}
\usepackage{cancel}

\makeatletter
\def\munderbar#1{\underline{\sbox\tw@{$#1$}\dp\tw@\z@\box\tw@}}
\makeatother

\definecolor{olivegreen}{RGB}{128, 128, 0}

\usepackage{algorithm}
\usepackage{algpseudocode}

\usepackage{caption}

\captionsetup{
	width=.9\textwidth, 
	justification=justified,
	font=small,
	textfont=it,
	labelfont=bf
}

\newcommand\scalemath[2]{\scalebox{#1}{\mbox{\ensuremath{\displaystyle #2}}}}

\hypersetup{
	colorlinks=true,       
	linkcolor=black,        
	citecolor=black,        
	filecolor=magenta,     
	urlcolor=black         
}

\newtheorem{corollary}{Corollary}

\newtheorem{remark}{Remark}
\newtheorem{problem}{Problem}
\newtheorem{proposition}{Proposition}

\newtheorem{proof}{Proof}

\begin{document}
	\title{\LARGE \bf Learning Quasi-LPV Models and Robust Control Invariant Sets with Reduced Conservativeness}
	
	\author{Sampath Kumar Mulagaleti and Alberto Bemporad, \textit{Fellow, IEEE}
		\thanks{The authors are with the IMT School for Advanced Studies
			Lucca, Piazza San Francesco 19, 55100, Lucca, Italy (\url{s.mulagaleti,
				alberto.bemporad}@imtlucca.it). This work was supported by the European Research Council (ERC), Advanced Research Grant COMPACT (Grant Agreement No. 101141351).}
	}


	\newcounter{tempEquationCounter}
	\newcounter{thisEquationNumber}
	\newenvironment{floatEq}
	{\setcounter{thisEquationNumber}{\value{equation}}\addtocounter{equation}{1}
		\begin{figure*}[!t]
			\normalsize\setcounter{tempEquationCounter}{\value{equation}}
			\setcounter{equation}{\value{thisEquationNumber}}
		}
		{\setcounter{equation}{\value{tempEquationCounter}}
			\hrulefill\vspace*{4pt}
		\end{figure*}
		
	}
	\newenvironment{floatEq2}
	{\setcounter{thisEquationNumber}{\value{equation}}\addtocounter{equation}{1}
		\begin{figure*}[!t]
			\normalsize\setcounter{tempEquationCounter}{\value{equation}}
			\setcounter{equation}{\value{thisEquationNumber}}
		}
		{\setcounter{equation}{\value{tempEquationCounter}}
		\end{figure*}
	}

	\maketitle
    \thispagestyle{empty}
	
	\begin{abstract}
		We present an approach to identify a quasi Linear Parameter Varying (qLPV) model of a plant, with the qLPV model guaranteed to admit a robust control invariant (RCI) set. It builds upon the concurrent synthesis framework presented in \cite{Mulagaleti2024qlpv}, in which the requirement of existence of an RCI set is modeled as a control-oriented regularization.  Here, we reduce the conservativeness of the approach by bounding the qLPV system with an uncertain LTI system, which we derive using bound propagation approaches. The resulting regularization function is the optimal value of a nonlinear robust optimization problem that we solve via a differentiable algorithm. We numerically demonstrate the benefits of the proposed approach over two benchmark approaches.
	\end{abstract}

	\thispagestyle{empty}
	\pagestyle{empty}
	
	\section{Introduction}
	For synthesizing control schemes for nonlinear systems, the first step involves identifying a dynamic model via system identification techniques \cite{Ljung2020}. Typically, controllers use the predictive capabilities of these models to optimize performance. This approach, however, might prove inadequate in scenarios when the underlying system is subject to constraints, since the predicted model behavior might deviate from the system's response. Robust identification methods address this by capturing both a nominal behavior and prediction errors \cite{kosut1992set,Milanese2004,Lauricella2020}. Yet, decoupling identification from controller synthesis can lead to feasibility issues. This has motivated the development of concurrent synthesis approaches that co-identify an uncertain model and a robust controller \cite{Chen2018,Mulagaleti2022}. Some reinforcement learning approaches, e.g., \cite{Zanon2021}, can also be interpreted in such a framework.
	
	For control-oriented identification, an effective model class is quasi-Linear Parameter Varying (qLPV) systems~\cite{Lovera2013}. In qLPV systems, a.k.a. self-scheduled LPV systems, the dynamics are described by linear models that change over time as a function of a scheduling vector, whose values are generated by a nonlinear function of the model state. This has led to extensive research on identifying such models \cite{Rizvi2018,Verhoek2022}. Recently, in \cite{Mulagaleti2024qlpv} we introduced a concurrent synthesis framework that guarantees the existence of a robust control invariant (RCI) set \cite{Blanchini2008} for the identified 
	qLPV model based on a softmax scheduling function. This in turn ensures that a feedback controller exists for the plant generating the data. The RCI set was parameterized as a configuration-constrained polytope \cite{Villaneuva2024}, which provides a convenient representation for developing such frameworks.
	At the core of the concurrent synthesis framework introduced in \cite{Mulagaleti2024qlpv} is a control-oriented regularization function. This function is defined as the value function of a convex quadratic program (QP) and quantifies the size of the largest robust control invariant (RCI) set obtainable for a given parameterization. In fact, any model for which this regularization function admits a finite value becomes a candidate for controller synthesis. By exploiting the softmax parameterization, invariance is achieved by encapsulating the qLPV model within a linear time-invariant (LTI) model with multiplicative uncertainty.
    
	\textit{Contribution}: This paper improves the approach of \cite{Mulagaleti2024qlpv}, by identifying models that admit RCI sets with reduced conservativeness. We develop a new control-oriented regularization based on an uncertain linear system that encapsulates the qLPV system, and is tighter than one used in \cite{Mulagaleti2024qlpv}. The resulting regularization is optimal value of a nonlinear robust optimization problem, which is solved using a differentiable algorithm. The algorithm is then embedded into the concurrent synthesis framework. Through a numerical example, we demonstrate the benefits of the proposed approach. The paper is structured as follows:
    Section \ref{sec:definition} formalizes the concurrent synthesis problem, and recalls the approach of \cite{Mulagaleti2024qlpv} to formulate the regularization function; 
    Section \ref{sec:main_contribution} presents the novel regularization function formulation, along with the differentiable algorithm to evaluate it;
    Section \ref{sec:con_synthesis} presents an algorithm to solve the concurrent synthesis problem;
    Section \ref{sec:example} presents a numerical example, and validates it against benchmark approaches;
    Section \ref{sec:conclusions} summarizes the contribution and discusses future research directions.

	\textit{Notation:} The set $\mathbb{I}_a^b:=\{a,\cdots,b\}$ indicates indices between $a$ and $b$. Given sets $\mathcal{A},\mathcal{B} \subseteq \R^n$, $\mathcal{A}\oplus \mathcal{B}$ and $\mathcal{A} \ominus \mathcal{B}$ denote their Minkowski sum and difference respectively, with the sum denoted as $a \oplus \mathcal{B}$ if $\mathcal{A}=\{a\}$ is a singleton. Given $a \in \R^n$, $|a| \in \R^n$ denotes the element-wise absolute value vector, and the function $\mathrm{softmax}:\R^{n} \to \R^{n}$ is defined with components $e^{a_i}/\sum_{j=1}^n e^{a_j}$ for $i \in \mathbb{I}_1^n$. The set $\mathrm{CH}\{x_i,i\in \mathbb{I}_1^m\}$ is the convex hull of vectors $x_1,\cdots,x_m$. The symbol $\otimes$ denotes the Kronecker product.
	
	\section{Problem setup}
	\label{sec:definition}
	We have dataset $\mathcal{D}:=\{(u_t,y_t),t \in \mathbb{I}_0^{N-1}\}$ of input-output measurements from the nonlinear plant
	\begin{align}\label{eq:underlying_nonlinear}
		\mathbf{z}^+ = \mathbf{f}(\mathbf{z},u), &&
		y = \mathbf{g} (\mathbf{z}), 
	\end{align}
	where $\scalemath{0.98}{u \in \R^{n_u}, y \in \R^{n_y}, \mathbf{z} \in \R^{n_{\mathbf{z}}}}$ are the input, output, and state vectors. Assuming the functions $\mathbf{f}$ and $\mathbf{g}$, and state dimension $n_{\mathrm{z}}$, are unknown, we tackle the following problem:
	\begin{problem}
		\label{prob:basic_problem}
		\textit{Identify a model to predict the behaviour of~\eqref{eq:underlying_nonlinear} using $\mathcal{D}$, while ensuring that the model can be used to synthesize a feedback controller to regulate the plant output inside a set $Y$, i.e., $y \in \Y$, using control inputs $u \in \U$.} 
	\end{problem}
	
	While Problem~\ref{prob:basic_problem} can be tackled stage-wise, i.e., system identification with uncertainty characterization followed by robust controller synthesis, it is possible that the latter may be suboptimal/infeasible. We avoid this using a concurrent synthesis framework which integrates both phases.
    
	\subsection{Concurrent synthesis framework}
	Consider the qLPV model
	\begin{align}
		\label{eq:qLPV_model}
		x^+ = A(p(x))x+B(p(x))u, && \hat{y}=Cx
	\end{align}
	of~\eqref{eq:underlying_nonlinear}, where $x \in \R^{n_x}$ is the model state, and the matrix-valued functions $A(p)$ and $B(p)$ are parameterized as $A(p):=\sum_{i=1}^{n_p} p_i A_i$ and $B(p):=\sum_{i=1}^{n_p} p_i B_i$ respectively.
	The scheduling function $p : \R^{n_x} \to \R^{n_p}$ is defined as
	\begin{align}
		\label{eq:softmax}
		p(x;\theta):=\mathrm{softmax}\left([\mathcal{N}(x;\theta_1), \cdots,\mathcal{N}(x;\theta_{n_p})]^{\top}\right),
	\end{align}
	where each $\mathcal{N}(x;\theta_i)$ is a feedforward neural network (FNN) whose weights and biases are collected in the vector $\theta_i$. The parameterization in~\eqref{eq:softmax} enforces $p$ to belong to the simplex
	\begin{align*}
		\mathcal{P}:=\left\{ p \ \middle| \ \sum_{i=1}^{n_p} p_i = 1, 0\leq p \leq 1\right\}.
	\end{align*}
	\begin{remark}
		The results in the sequel can be extended to parameterizations $p(x,u)$, i.e., with dependency also on $u$. 
	\end{remark}
	
	The system identification problem computes model parameters $\mathbf{A}:=(A_1,\cdots,A_{n_p})$, $\mathbf{B}:=(B_1,\cdots,B_{n_p})$, $C$, and $\theta:=(\theta_1,\cdots,\theta_{n_p})$ by solving the optimization problem
	\begin{align}
		\label{eq:pure_sysID}
		&\min_{\mathbf{A},\mathbf{B},C,\theta,x_0} \ \frac{1}{N}\sum_{t=0}^{N-1} \|y_t - Cx_t\|_2^2 \\
		& \ \ \ \ \text{s.t.} \ \ \ x_{t+1} = A(p(x_t,\theta))x+B(p(x_t,\theta))u_t, \ t \in \mathbb{I}_0^{N-1}, \nonumber
	\end{align}
	in which also the initial state is optimized. Unfortunately, a constrained controller for a model obtained from~\eqref{eq:pure_sysID} is not guaranteed to exist, since the output $\hat{y}$ of~\eqref{eq:qLPV_model} might not match the plant output $y$ exactly. We ameliorate this using a \textit{control-oriented regularization}~\cite{Mulagaleti2024qlpv} based on the state-observer model
	\begin{align}
		\label{eq:state_observer}
		z^+ = A(p(z))z+B(p(z))u+L(p(z))w, 
	\end{align}
	where we parameterize the disturbance as $w:=y-Cz$ and the observer gain as $L(p):=\sum_{i=1}^{n_p} p_i L_i$.
	We now recall~\cite[Prop. 1, Prop. 2]{Mulagaleti2024qlpv} to derive an uncertain model based on~\eqref{eq:state_observer}.
	\begin{proposition}
		\label{prop:basic_observer}
		($i$) Suppose that the behavior of system~\eqref{eq:underlying_nonlinear} is described by the model
		\begin{align}
			\label{eq:underlying_fake}
			\hat{x}^+=A(p(\hat{x}))x+B(p(\hat{x}))u && y \in C\hat{x}+\mathbb{V},
		\end{align}
		for some $\mathbb{V} \subset \R^{n_y}$, and there exists some set $\mathcal{E} \subseteq \R^{n_x}$ that satisfies $\hat{x}_0 - z_0 \in \mathcal{E} \ \Rightarrow \ \hat{x}_t - z_t \in \mathcal{E}$ for all $t> 0$ 
		when~\eqref{eq:state_observer} and~\eqref{eq:underlying_fake} are excited by the same inputs. Defining
		\begin{align}
			\label{eq:W_definition}
			\mathbb{W}:=C\mathcal{E} \oplus \mathbb{V},
		\end{align}
        and denoting the set of states reached by \eqref{eq:state_observer} at time $t$ from some $z_0$ for all possible $w \in \mathbb{W}$ sequences and given input sequence as $Z_t$, it follows that $\hat{x}_0-z_0 \in \mathcal{E} \ \Rightarrow y_t \in CZ_t \oplus \mathbb{W}$ for all $t>0$ for all $v \in \mathbb{V}$ sequences, such that
        \begin{align}
			\label{eq:uncertain_model}
			\hspace{-7pt}
			\scalemath{0.97}{z^+ \hspace{-2pt} \in  \hspace{-2pt}  A(p(z))z+B(p(z))u \oplus L(p(z)) \mathbb{W}, 
				\ \ y \in Cz \oplus \mathbb{W} }
		\end{align}
        is a valid uncertain model of \eqref{eq:underlying_fake}; ($ii$) Suppose there exists a set $X \subseteq \R^{n_x}$ and control law $\mu_{\mathrm{c}}:X \to \U$ verifying%
        \begin{subequations}
        \label{eq:nonlinear_RCI}
        \begin{align}
           &  \hspace{-4pt} A(p(z))z+B(p(z))\mu(z)\oplus L(p(z)) \mathbb{W} \subseteq X,  \ \forall  z \in X, \\
            & \hspace{-4pt} CX \oplus \mathbb{W} \subseteq \Y.
        \end{align}
        \end{subequations}
        Then from any $z_0 \in X$ and $\hat{x}_0 \in z_0 \oplus  \mathcal{E}$, the control input $u_t = \mu_{\mathrm{c}}(z_t)$ ensures $z_t \in X$ and $y_t \in \Y$ for all $t\geq 0$. \hfill $\square$
	\end{proposition}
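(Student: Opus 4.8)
The statement is essentially a bookkeeping exercise with Minkowski sums together with one short induction, so the plan is to split the proof along its two parts.

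For part ($i$), I would introduce the error variable $e:=\hat{x}-z$ between the plant surrogate \eqref{eq:underlying_fake} and the observer \eqref{eq:state_observer} driven by the same input. Since \eqref{eq:underlying_fake} gives $y = C\hat{x}+v$ for some $v\in\mathbb{V}$, the disturbance actually fed to the observer is $w = y-Cz = Ce+v$. The crucial observation is that, as long as $e\in\mathcal{E}$, this forces $w\in C\mathcal{E}\oplus\mathbb{V}=\mathbb{W}$ by \eqref{eq:W_definition}. Using the hypothesis that $\mathcal{E}$ is invariant for the error dynamics ($e_0\in\mathcal{E}\Rightarrow e_t\in\mathcal{E}$ for all $t$ under common inputs and any admissible $v$-sequence), I would conclude inductively that $w_t\in\mathbb{W}$ for all $t\ge 0$. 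Hence the observer trajectory $\{z_t\}$ is generated by \eqref{eq:state_observer} with a disturbance sequence in $\mathbb{W}$, so $z_t\in Z_t$ by the definition of $Z_t$. Writing $y_t = C\hat{x}_t+v_t = Cz_t+(Ce_t+v_t)$ with $Cz_t\in CZ_t$ and $Ce_t+v_t\in\mathbb{W}$ then yields $y_t\in CZ_t\oplus\mathbb{W}$. Finally, the same trajectory certifies that \eqref{eq:uncertain_model} is a valid uncertain model: from $w_t\in\mathbb{W}$ one gets $z_{t+1}\in A(p(z_t))z_t+B(p(z_t))u_t\oplus L(p(z_t))\mathbb{W}$ and $y_t\in Cz_t\oplus\mathbb{W}$, i.e. $\{(z_t,y_t)\}$ is an admissible trajectory of \eqref{eq:uncertain_model} reproducing the plant output.

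For part ($ii$), the plan is a straightforward induction on $t$ showing $z_t\in X$. The base case is the hypothesis $z_0\in X$. For the inductive step, assume $z_t\in X$ and apply $u_t=\mu_{\mathrm{c}}(z_t)$; since $\hat{x}_0\in z_0\oplus\mathcal{E}$ means $e_0\in\mathcal{E}$, part ($i$) gives $w_t\in\mathbb{W}$, and therefore
\begin{align*}
z_{t+1} &= A(p(z_t))z_t+B(p(z_t))\mu_{\mathrm{c}}(z_t)+L(p(z_t))w_t \\
& \in A(p(z_t))z_t+B(p(z_t))\mu_{\mathrm{c}}(z_t)\oplus L(p(z_t))\mathbb{W} \subseteq X
\end{align*}
by the first inclusion in \eqref{eq:nonlinear_RCI}. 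For the output, $y_t = Cz_t+(Ce_t+v_t)\in CX\oplus\mathbb{W}\subseteq\Y$ by the second inclusion in \eqref{eq:nonlinear_RCI}, using $z_t\in X$ and $Ce_t+v_t\in\mathbb{W}$. This closes the induction and establishes the claim.

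I do not expect a genuine obstacle here; the proof is almost entirely set arithmetic. The one point needing care — and the reason the construction is built around the specific choice $\mathbb{W}=C\mathcal{E}\oplus\mathbb{V}$ — is recognizing that although the observer's internal disturbance $w=y-Cz$ depends on the unknown plant state through $e$, the invariance of $\mathcal{E}$ keeps it inside $\mathbb{W}$; once that is in place, both parts follow by routine inclusions and induction.
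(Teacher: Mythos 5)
Your proof is correct and follows the same standard observer-error argument that this result rests on (the paper itself does not reproduce a proof, recalling the proposition from \cite{Mulagaleti2024qlpv}): the assumed invariance of $\mathcal{E}$ forces $w_t = Ce_t + v_t \in C\mathcal{E} \oplus \mathbb{V} = \mathbb{W}$, after which part ($i$) is immediate from the definition of $Z_t$ and part ($ii$) is the routine induction using the two inclusions in \eqref{eq:nonlinear_RCI}. The only cosmetic remark is that no real induction is needed for $w_t \in \mathbb{W}$, since $e_t \in \mathcal{E}$ for all $t$ is directly the hypothesis rather than something to be propagated.
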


    The first result in Proposition~\ref{prop:basic_observer} states that the uncertain system~\eqref{eq:uncertain_model} can be \textit{forward simulated} to bound the plant output if \eqref{eq:W_definition} holds. This is a standard robust control approach using the observer invariant set $\mathcal{E}$ \cite{Mayne2006}. Through a suitable choice of $\mathbb{V}$, any system \eqref{eq:underlying_nonlinear} (potentially also including bounded noise) can be represented as \eqref{eq:underlying_fake}.
    The second result states that if the state of \eqref{eq:uncertain_model} can be persistently maintained inside $X$ using a control law $\mu_{\mathrm{c}}$, then it can be used to regulate the underlying plant output inside $Y$. As per \eqref{eq:nonlinear_RCI}, $X$ is an RCI set for \eqref{eq:uncertain_model}. In the sequel, we modify Problem \eqref{eq:pure_sysID} to $(i)$ Compute observer gains $\mathbf{L}:=(L_1,\cdots,L_{n_p})$, and $(ii)$ Guarantee the existence of a set $X$ verifying \eqref{eq:nonlinear_RCI}. Denoting $\Theta:=(\mathbf{A},\mathbf{B},C,\mathbf{L},\theta)$, we formulate the modified problem as
    \begin{align}
		\label{eq:concurrent_sysID}
		&\ \ \ \min_{\Theta,x_0} \ \ \frac{1}{N}\sum_{t=0}^{N-1} \|y_t - Cx_t\|_2^2 + \tau \mathbf{r}(\Theta)\\
		& \ \ \ \ \text{s.t.} \ \ \ x_{t+1} = A(p(x_t,\theta))x+B(p(x_t,\theta))u_t, \ t \in \mathbb{I}_0^{N-1}, \nonumber
	\end{align}
	where the \textit{control-oriented regularization} $\mathbf{r}(\Theta)$ is such that%
    \begin{subequations}
    \label{eq:r_requirement}
    \begin{align}
        \label{eq:r_requirement_1}
        &\mathbf{r}(\Theta)<\infty \ \ \Rightarrow \ \exists X \subseteq \R^{n_x} \ \text{verifying} \ \eqref{eq:nonlinear_RCI},     \\
        \label{eq:r_requirement_2}
        &\textit{Small } \mathbf{r}(\Theta) \ \Rightarrow \ \exists \textit{ Large } X \subseteq \R^{n_x} \ \text{verifying} \ \eqref{eq:nonlinear_RCI},
    \end{align}
    \end{subequations}
     and $\tau \geq 0$ is the regularization constant. While the requirement \eqref{eq:r_requirement_2} is informally stated, we will formalize it in the sequel. The goal of Problem \eqref{eq:concurrent_sysID} is to identify a model $\Theta$, while also maximizing the size of the corresponding RCI set. In this context, we say a model $\Theta_1$ is \textit{less conservative} that $\Theta_2$ if they admit RCI sets satisfying $\mathbf{r}(\Theta_1) < \mathbf{r}(\Theta_2)$.
    \subsection{Control-oriented regularization}
    We now recall the approach of \cite{Mulagaleti2024qlpv} which formulates $\mathbf{r}(\Theta)$ as inverse of the size of the largest RCI set admitted by $\Theta$.
    This formulation requires: ($i$) a characterization of the disturbance set $\mathbb{W}$; ($ii$) a description of the RCI set $X$; and ($iii$) a suitable measure to quantify the size of an RCI set. 

    \subsubsection{Characterization of $\mathbb{W}$} We utilize a simple data-driven characterization of $\mathbb{W}$, since the sets $\mathcal{E}$ and $\mathbb{V}$ required to compute it as \eqref{eq:W_definition} are unknown. Assuming access to a dataset $\mathcal{D}^{\mathrm{w}}:=\{(y_t,u_t),t \in \mathbb{I}_0^{N^{\mathrm{w}}-1}\}$ from \eqref{eq:underlying_nonlinear}, we simulate \eqref{eq:state_observer} from the origin using $\mathcal{D}^{\mathrm{w}}$ (with $w_t=y_t-Cz_t$), and denote the resulting state sequence as $\{z_t^{\mathrm{w}},t \in \mathbb{I}_0^{N^{\mathrm{w}}}\}$. Defining the sampled disturbances as $\mathcal{W}:=\{y_t - Cz_t^{\mathrm{w}} \mid t \in \mathbb{I}_0^{N^{\mathrm{w}}-1}\}$,
    we denote $\bar{\mathrm{w}}:=\max_{w \in \mathcal{W}} w$ and $\munderbar{\mathrm{w}}:=\min_{w \in \mathcal{W}} w$, along with $\mathrm{c}_{\mathrm{w}}:=0.5(\overline{\mathrm{w}}+\munderbar{\mathrm{w}})$ and $\epsilon_{\mathrm{w}}:=\frac{1}{2}(\overline{\mathrm{w}}-\munderbar{\mathrm{w}})$.
    Then, we characterize the set $\mathbb{W}$ for given $\Theta$ as
    \begin{align}
        \label{eq:W_characterization}
        \mathbb{W}:=\{w \ \mid \  |w-\mathrm{c}_{\mathrm{w}}| \leq \kappa \epsilon_{\mathrm{w}}\},
    \end{align}
    where $\kappa>0$ is a user-specified inflation parameter to account for finite data. Note that $\mathbb{W}$ is an inflated bounding box of the sampled disturbances $\mathcal{W}$ built using the dataset $\mathcal{D}^{\mathrm{w}}$. We refer to \cite[Prop. 4]{Mulagaleti2024qlpv} for lower-bounds on $\kappa>1$ to verify \eqref{eq:W_definition}.

    \subsubsection{Characterization of $X$} We work with polytopic RCI sets $X$ parameterized with given matrix $F \in \R^{\fe \times n_x}$ as
    \begin{align*}
		X \leftarrow X(q):=\{x \mid Fx \leq q\},
	\end{align*}
	and enforce configuration-constraints $\mathcal{C}$~\cite{Villaneuva2024} which dictate
	\begin{align*}
		q \in \mathcal{C}:=\{q \mid Eq\leq 0\} \Rightarrow  X(q)=\mathrm{CH}\{V_j q, j \in \mathbb{I}_1^{\ve}\},
	\end{align*}
	where $V_j \in \R^{n_x \times \fe}$ are vertex maps. We refer to~\cite{Villaneuva2024} for details about computing $E$ and $V:=(V_1,\cdots,V_{\ve})$ given $F$.  
    To enforce the RCI constraints in \eqref{eq:nonlinear_RCI}, the approach of \cite{Mulagaleti2024qlpv} exploits the parameterization of the scheduling function in \eqref{eq:softmax}, which implies $p(z) \in \mathcal{P}$ for all $z \in \R^{n_x}$ such that
    \begin{align}
    \label{eq:multiplicative_uncertainty_full}
    \hspace{-5pt}
    (A(p),B(p),L(p)) \in \Delta:= \mathrm{CH}\{(A_i,B_i,L_i), i \in \mathbb{I}_1^{n_p}\}
    \end{align}
    for all $p$. Then, an RCI set for the uncertain linear system
    \begin{align}
    \label{eq:original_uncertain}
        z^+ = Az+Bu+Lw, && (A,B,L) \in \Delta, w \in \mathbb{W}
    \end{align}
    is RCI for uncertain system \eqref{eq:uncertain_model}. 
    To enforce that $X(q)$ is an RCI set for \eqref{eq:original_uncertain}, we use the following result from \cite{Villaneuva2024}, where we denote $U_j:=e_j^{\top} \otimes \I_{n_u}$ for $j \in \mathbb{I}_1^{\ve}$, and assume that the output constraint set $\Y:=\{y \mid \Hy y\leq \hy\}$.
    \begin{proposition}
    \label{prop:RCI_proposition}
       The set $X(q)$ is an RCI set for \eqref{eq:original_uncertain} with constraints $Cz \oplus \mathbb{W} \subseteq \Y$ and $u \in \U$ if there exists some $v \in \R^{\ve \cdot n_u}$ such that $(q,v) \in \mathbb{S}$, where we define
       \begin{align}
           \label{eq:S_definition}
           \scalemath{0.98}{\mathbb{S}:=\left\{\begin{pmatrix} q \\ v \end{pmatrix} \middle| \ 
           \begin{matrix*}[l] 
            \forall (i,j) \in \mathbb{I}_1^{n_p} \times \mathbb{I}_1^{\ve}, Eq \leq 0, U_j v \in \U,\\
            F(A_i V_j q + B_i U_j v)+d_i \leq q, \\
            \Hy (CV_jq+\mathrm{c}_{\mathrm{w}})+\kappa |\Hy| \epsilon^{\mathrm{w}} \leq \hy
           \end{matrix*} \right\},}
       \end{align} 
       with $d_i:=FL_i \mathrm{c}_{\mathrm{w}}+\kappa |FL_i| \epsilon_{\mathrm{w}}$ for all $i \in \mathbb{I}_1^{n_p}$. \hfill $\square$
    \end{proposition}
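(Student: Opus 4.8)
The plan is to show that membership $(q,v)\in\mathbb{S}$ certifies, via the configuration-constraint machinery of \cite{Villaneuva2024}, that $X(q)$ is robustly invariant for \eqref{eq:original_uncertain} and that all the imposed state/input constraints hold. First I would recall the structure: when $Eq\leq 0$, the polytope $X(q)$ equals $\mathrm{CH}\{V_jq,\ j\in\mathbb{I}_1^{\ve}\}$, so any $z\in X(q)$ can be written as $z=\sum_j \lambda_j V_jq$ with $\lambda\in\mathcal P$ (the simplex), and correspondingly I would pick the control $u=\sum_j\lambda_j U_jv$, which lies in $\U$ because each $U_jv\in\U$ and $\U$ is convex; this is precisely the vertex-control parameterization underlying configuration-constrained RCI sets. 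The key observation is that it suffices to verify the invariance inclusion at the \emph{vertices} $V_jq$ of $X(q)$ and at the \emph{vertices} $(A_i,B_i,L_i)$ of the uncertainty set $\Delta$, since the map $(z,u,(A,B,L))\mapsto Az+Bu$ is multi-affine and $X(q)$ is a polytope with the $F$-representation fixed.

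Next I would handle the disturbance term. For each vertex pair $(i,j)$ and each $w\in\mathbb{W}$, I need $F(A_iV_jq+B_iU_jv+L_iw)\leq q$. Using the box characterization \eqref{eq:W_characterization}, $w=\mathrm{c}_{\mathrm w}+\delta$ with $|\delta|\leq\kappa\epsilon_{\mathrm w}$, so $FL_iw = FL_i\mathrm{c}_{\mathrm w}+FL_i\delta$, and row-wise $\max_{|\delta|\leq\kappa\epsilon_{\mathrm w}}(FL_i)_k\delta = \kappa|(FL_i)_k|\epsilon_{\mathrm w}$; hence $\max_{w\in\mathbb{W}}FL_iw = FL_i\mathrm{c}_{\mathrm w}+\kappa|FL_i|\epsilon_{\mathrm w} = d_i$. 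Therefore the constraint $F(A_iV_jq+B_iU_jv)+d_i\leq q$ in $\mathbb{S}$ is exactly the worst-case-over-$w$ version of the vertex invariance condition. Taking convex combinations over $j$ (with weights $\lambda$) and using that $(A,B,L)$ ranges over $\Delta=\mathrm{CH}\{(A_i,B_i,L_i)\}$, I would conclude $Fz^+\leq q$ for every $z\in X(q)$, every admissible realization, and every $w\in\mathbb{W}$ — i.e., the inclusion in \eqref{eq:original_uncertain}'s RCI definition holds, and consequently (by \eqref{eq:multiplicative_uncertainty_full}) also for \eqref{eq:uncertain_model}.

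Finally I would dispatch the output constraint. For $z\in X(q)$, $Cz=\sum_j\lambda_j CV_jq$, and $y\in Cz\oplus\mathbb{W}$ means $y=Cz+\mathrm{c}_{\mathrm w}+\delta$ with $|\delta|\leq\kappa\epsilon^{\mathrm w}$; applying $\Hy$ row-wise and maximizing over $\delta$ gives $\Hy y\leq \sum_j\lambda_j\Hy CV_jq+\Hy\mathrm{c}_{\mathrm w}+\kappa|\Hy|\epsilon^{\mathrm w}$, which is $\leq\hy$ by the third group of inequalities in \eqref{eq:S_definition} and convexity in $\lambda$. Together with $u=\sum_j\lambda_j U_jv\in\U$, this establishes $CX(q)\oplus\mathbb{W}\subseteq\Y$ and $u\in\U$, completing the proof. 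The main obstacle — really the only nontrivial point — is justifying the reduction from "all $z\in X(q)$, all $(A,B,L)\in\Delta$" to "vertices only," i.e., invoking the configuration-constraint result of \cite{Villaneuva2024} correctly: one must be careful that the vertex maps $V_j$ are valid precisely because $Eq\leq 0$ is enforced, and that the multi-affinity argument legitimately interchanges the two convex combinations (over state vertices $j$ and over uncertainty vertices $i$); everything else is the routine support-function computation for box-valued disturbances shown above.
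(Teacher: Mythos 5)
Your proof is correct. Note that the paper does not prove this proposition itself --- it is imported verbatim from \cite{Villaneuva2024} (hence the $\square$ with no proof environment) --- and your argument is precisely the standard one underlying that cited result: use $Eq\leq 0$ to get the vertex representation $X(q)=\mathrm{CH}\{V_jq\}$, attach the vertex control $u=\sum_j\lambda_j U_jv$, replace the box disturbance by its row-wise support values (which is exactly why $d_i=FL_i\mathrm{c}_{\mathrm{w}}+\kappa|FL_i|\epsilon_{\mathrm{w}}$ appears), and then pass from the finitely many vertex conditions over $(i,j)$ to all of $X(q)\times\Delta$ via the double convex combination, which is legitimate because $z^+=Az+Bu+Lw$ is affine in $(z,u)$ for fixed $(A,B,L,w)$ and affine in $(A,B,L)$ for fixed $(z,u,w)$, and $X(q)$ is described by the linear inequalities $Fx\leq q$. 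The only implicit hypothesis you lean on is convexity of $\U$ (needed so that $\sum_j\lambda_jU_jv\in\U$), which is standard in this setting and consistent with the paper's polytopic constraint sets, but it would be worth stating explicitly.
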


    Essentially, Proposition \ref{prop:RCI_proposition} together with \eqref{eq:multiplicative_uncertainty_full} state that
    \begin{align}
    \label{eq:desirable_property}
        \exists \ v : (q,v) \in \mathbb{S} \ \Rightarrow \ X(q) \ \text{satisfies} \ \eqref{eq:nonlinear_RCI},
    \end{align}
    with $\mu_{\mathrm{c}}(z)$ being a vertex control law defined by $v$ \cite{Villaneuva2024}.

    \subsubsection{Size of the RCI set}
    We define the size of an RCI set based on the ability of the system to perform safe output tracking. Denoting the vertices of $\mathbb{Y}$ as $\{\y_k, k \in \mathbb{I}_1^{v_y}\}$ and the mean values of the matrices $(A_i,B_i),i \in \mathbb{I}_1^{n_p}$ as $(\overline{A},\overline{B})$, 
    the size of $X(q)$ is modeled as
    \begin{align}
        \mathrm{d}(\overline{A},\overline{B},C,q):=&\min_{\mathrm{z},\mathrm{u}} \ \ \sum_{k=1}^{v_y} \sum_{t=1}^{M} \|\y_k-Cz^k_t\|_2^2 \label{eq:quality_function}\\
        & \  \ \text{s.t.} \ \ \scalemath{0.95}{z_0^k = 0, \ z_{t+1}^k = \overline{A} z^k_t + \overline{B} u^k_t,  \ u^k_t \in \U,} \nonumber \\
        & \quad \ \ \ \ \ \scalemath{0.95}{Fz^k_t \leq q, \  \forall (k,t) \in \mathbb{I}_1^{v_y} \times \mathbb{I}_0^{M-1}}, \nonumber
    \end{align}
    where $(\mathrm{z},\mathrm{u})$ denote the state and input trajectories. The value $\mathrm{d}(\overline{A},\overline{B},C,q)$ captures how close the output $y=Cz$ of the nominal system $z^+=\overline{A}z+\overline{B}u$ can be driven from the origin to the vertices of $\mathbb{Y}$ in $M$-steps while belonging inside $X(q)$. Note that for any feasible vectors $q_1$ and $q_2$, the inequality
    \begin{align}
    \label{eq:size_inequality}
        X(q_1) \subseteq X(q_2) \ \Rightarrow \ \mathrm{d}(\overline{A},\overline{B},C,q_2) \leq \mathrm{d}(\overline{A},\overline{B},C,q_1)
    \end{align}
    holds since the optimizers of the latter problem are feasible for the former.
    Using the ingredients in \eqref{eq:W_characterization}, \eqref{eq:S_definition} and \eqref{eq:quality_function}, the approach of \cite{Mulagaleti2024qlpv} models the control-oriented regularization function $\mathrm{r}(\Theta)$ in Problem \eqref{eq:concurrent_sysID} as the value of the QP
    \begin{align}
    \label{eq:old_r}
        \mathrm{r}(\Theta):=\inf_{(q,v) \in \mathbb{S}} \ \mathrm{d}(\overline{A},\overline{B},C,q).
    \end{align}
    As per \eqref{eq:desirable_property} and \eqref{eq:size_inequality}, $\mathbf{r}(\Theta)$ satisfies \eqref{eq:r_requirement}, making it a suitable control-oriented regularization. 
    However, an RCI set $X(q)$ obtained by solving \eqref{eq:old_r} might be unnecessarily \textit{small}, because the uncertain LTI system \eqref{eq:original_uncertain} encapsulating \eqref{eq:uncertain_model} might be too conservative. In the next section, we derive an  uncertain LTI approximation that is less conservative, and formulate a corresponding control-oriented regularization.
    \begin{remark}
    The choice of function $\mathrm{d}$ should reflect the goals of the control design procedure, e.g., ($i$) regulation around an output setpoint, with $\mathrm{d}$ modeled to minimize the volume of the RCI set about that point \cite{Trodden2016};  ($ii$) tuned tube-based model predictive control schemes \cite{Badalamenti2024} for stabilization, etc. Future research can focus on deriving such formulations.
    \end{remark}

    \section{Control-oriented regularization with reduced conservativeness}
    \label{sec:main_contribution}
    While the uncertain LTI system \eqref{eq:original_uncertain} encapsulates the nonlinear system \eqref{eq:uncertain_model}, it is possible that the scheduling function $p(z)$ with $z \in X(q)$ does not cover the entire set $\mathcal{P}$, such that $(A(p(z)),B(p(z)),L(p(z)))$ only evolves in some $\tilde{\Delta} \subseteq \Delta$. We exploit this observation to derive a tightened multiplicative uncertainty $\tilde{\Delta}$, and present an approach to compute an RCI set for \eqref{eq:original_uncertain} with $\Delta$ replaced by $\tilde{\Delta}$.
    \subsection{RCI sets with reduced conservativeness}
    We now present new conditions for a set $X(q)$ to be an RCI set for \eqref{eq:uncertain_model}, which are based on an LTI system with tightened multiplicative uncertainty $\tilde{\Delta} \subseteq \Delta$.
    \begin{proposition}
		\label{prop:BP_result}
		Suppose the scheduling variable satisfies
		\begin{align}
			\label{eq:a_lb_p}
			0 \leq a_i \leq p_i(z), && \forall z \in X(q), i \in \mathbb{I}_1^{n_p}
		\end{align}
		for given $q \in \mathcal{C}$. If $X(q)$ is RCI for the uncertain LTI system
		\begin{align}
			\label{eq:uncertain_tighter}
			\hspace{-6pt} z^+ = Az+Bu + Lw, && (A,B,L) \in \tilde{\Delta}, w \in \mathbb{W},
		\end{align}
		where $\tilde{\Delta}:=\mathrm{CH}\{(\tilde{A}_i,\tilde{B}_i,\tilde{L}_i),i \in \mathbb{I}_1^{n_p}\}$ is defined with
		\begin{align}
			(\tilde{A}_i,\tilde{B}_i,\tilde{L}_i):=\scalemath{0.8}{\left(1-\sum_{j=1}^{n_p}a_j\right)}(A_i,B_i,L_i) + \scalemath{0.8}{\sum_{j=1}^{n_p}} a_j (A_j,B_j,L_j)  \label{eq:tilde_definitions}
		\end{align}
		then $X(q)$ verifies the inclusion in~\eqref{eq:nonlinear_RCI}.
	\end{proposition}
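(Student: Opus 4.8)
The plan is to show that the lower bound \eqref{eq:a_lb_p} forces the scheduled matrices $(A(p(z)),B(p(z)),L(p(z)))$ to lie in the shifted polytope $\tilde{\Delta}$ whenever $z \in X(q)$; once this is established, robust invariance for the uncertain LTI system \eqref{eq:uncertain_tighter} specializes directly to the set inclusion in \eqref{eq:nonlinear_RCI}. So the core of the argument is a ``simplex-shift'' reparameterization of $p(z)$.

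First I would fix an arbitrary $z \in X(q)$. Since $p(z) \in \mathcal{P}$ we have $\sum_{i=1}^{n_p} p_i(z) = 1$, which combined with \eqref{eq:a_lb_p} gives $\sum_{i=1}^{n_p} a_i \leq 1$, so that $\alpha := 1-\sum_{j=1}^{n_p} a_j \geq 0$. In the generic case $\alpha>0$ I would introduce the weights $\lambda_i(z):=(p_i(z)-a_i)/\alpha$, which are nonnegative by \eqref{eq:a_lb_p} and satisfy $\sum_{i=1}^{n_p}\lambda_i(z)=(1-\sum_j a_j)/\alpha=1$, i.e.\ $\lambda(z)\in\mathcal{P}$. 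The key algebraic step is then the identity
\begin{align*}
\sum_{k=1}^{n_p}\lambda_k(z)\,(\tilde{A}_k,\tilde{B}_k,\tilde{L}_k)
&= \alpha\sum_{k=1}^{n_p}\lambda_k(z)\,(A_k,B_k,L_k)+\sum_{j=1}^{n_p}a_j\,(A_j,B_j,L_j)\\
&= \sum_{i=1}^{n_p}\bigl(a_i+\alpha\lambda_i(z)\bigr)(A_i,B_i,L_i)
 = \sum_{i=1}^{n_p}p_i(z)\,(A_i,B_i,L_i),
\end{align*}
obtained by substituting \eqref{eq:tilde_definitions}, using $\sum_k\lambda_k(z)=1$, and noting $a_i+\alpha\lambda_i(z)=p_i(z)$. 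Hence $(A(p(z)),B(p(z)),L(p(z)))$ is a convex combination of the generators of $\tilde{\Delta}$, so it lies in $\tilde{\Delta}$. For the degenerate case $\alpha=0$, \eqref{eq:a_lb_p} forces $p(z)=(a_1,\dots,a_{n_p})$ and \eqref{eq:tilde_definitions} collapses each generator to $\sum_i a_i(A_i,B_i,L_i)$, so $\tilde{\Delta}$ is the singleton containing exactly $(A(p(z)),B(p(z)),L(p(z)))$; the claim therefore holds in all cases.

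Finally, since $z \in X(q)$ was arbitrary, every closed-loop trajectory of \eqref{eq:uncertain_model} initialized in $X(q)$ is reproduced by \eqref{eq:uncertain_tighter} upon selecting, at each step, the uncertainty instance $(A(p(z)),B(p(z)),L(p(z)))\in\tilde{\Delta}$. Consequently, if $X(q)$ is RCI for \eqref{eq:uncertain_tighter} with control law $\mu_{\mathrm{c}}$, then for every $z \in X(q)$ we have $A(p(z))z+B(p(z))\mu_{\mathrm{c}}(z)\oplus L(p(z))\mathbb{W}\subseteq X(q)$, which is exactly the state inclusion in \eqref{eq:nonlinear_RCI}, while the output condition $CX(q)\oplus\mathbb{W}\subseteq\mathbb{Y}$ is inherited verbatim from the RCI property of \eqref{eq:uncertain_tighter} since it does not involve the dynamics. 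I do not expect a genuine obstacle here: the only delicate points are getting the index bookkeeping of the simplex shift to match the vertex definition \eqref{eq:tilde_definitions}, and treating the boundary case $\sum_i a_i = 1$ separately.
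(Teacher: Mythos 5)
Your proof is correct and follows essentially the same route as the paper: the shifted weights $\lambda_i(z)=(p_i(z)-a_i)/(1-\sum_j a_j)$ are exactly the paper's $\tilde{p}_i(z)$, and the algebraic identity showing $(A(p(z)),B(p(z)),L(p(z)))\in\tilde{\Delta}$ for all $z\in X(q)$ is the same computation, just read in the reverse direction. The only difference is that you explicitly treat the degenerate case $\sum_i a_i=1$, which the paper's proof leaves implicit; this is a minor but welcome addition.
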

	\begin{proof}
		The proof follows if for all $z \in X(q)$, we have $(A(p(z)),B(p(z)),L(p(z))) \in \tilde{\Delta}$. To show this, we define 
		\begin{align*}
			\tilde{p}_i(z) = \cfrac{p_i(z)-a_i}{1-\sum_{j=1}^{n_p}a_j}, && \forall i \in \mathbb{I}_1^{n_p}.
		\end{align*}
		Since $\sum_{j=1}^{n_p} a_j \leq \sum_{j=1}^{n_p} p_j(z) \leq 1$ holds, we have $\tilde{p}(z) \in \mathcal{P}$ if $z \in X(q)$. Then, $A(p(z))$ can be written for $z \in X(q)$ as
		\begin{align*}
			A(p(z)) &= \scalemath{0.8}{\sum_{i=1}^{n_p} \left(\tilde{p}_i(z)+a_i-\tilde{p}_i(z) \sum_{j=1}^{n_p}a_j \right)} A_i \\
			&\hspace{-25pt} =\scalemath{0.8}{\sum_{i=1}^{n_p} \left(\tilde{p}_i(z)+a_i\sum_{j=1}^{n_p} \tilde{p}_j(z)-\sum_{j=1}^{n_p} a_j \tilde{p}_i(z) \right)}A_i \\
			&\hspace{-25pt}  = \scalemath{0.8}{\sum_{i=1}^{n_p} \left(1-\sum_{j=1}^{n_p} a_j \right)\tilde{p}_i(z)} A_i + \scalemath{0.8}{\sum_{i=1}^{n_p} \sum_{j=1}^{n_p} a_j \tilde{p}_i(z)}A_i  = \sum_{i=1}^{n_p} \tilde{p}_i(z) \tilde{A}_i,
		\end{align*}
		where the second equality follows from $\tilde{p}(z) \in \mathcal{P}$, and the third by interchanging the summation order. Hence, we have that $(A(p(z)),B(p(z)),L(p(z))) \in \tilde{\Delta}$ if $z \in X(q)$.
	\end{proof}
    
    The next result derives a set similar to $\mathbb{S}$ for \eqref{eq:uncertain_tighter}.
    \begin{corollary}
    \label{corr:RCI_proposition_new}
       The set $X(q)$ is an RCI set for \eqref{eq:uncertain_tighter} with constraints $Cz \oplus \mathbb{W} \subseteq \Y$ and $u \in \U$ if there exists some $v \in \R^{\ve \cdot n_u}$ such that $(q,v) \in \tilde{\mathbb{S}}$, where we define
       \begin{align*}
           \tilde{\mathbb{S}}:=\scalemath{0.98}{\left\{\begin{pmatrix} q \\ v \end{pmatrix} \middle|  \
           \begin{matrix*}[l] 
           \forall (i,j) \in \mathbb{I}_1^{n_p} \times \mathbb{I}_1^{\ve},  \ \exists a \in [0,p(z)], \forall z \in X(q),   \\
            F(\tilde{A}_i(a) V_j q + \tilde{B}_i(a) U_j v)+\tilde{d}_i(a) \leq q, \\
            \Hy (CV_jq+\mathrm{c}_{\mathrm{w}})+\kappa |\Hy| \epsilon^{\mathrm{w}} \leq \hy,  \\
            U_j v \in \U, Eq \leq 0 
           \end{matrix*} \right\}},
       \end{align*} 
       with $\tilde{d}_i(a):=F\tilde{L}_i(a) \mathrm{c}_{\mathrm{w}}+\kappa |F\tilde{L}_i(a)| \epsilon_{\mathrm{w}}$ for $i \in \mathbb{I}_1^{n_p}$, and the matrices $(\tilde{A}_i,\tilde{B}_i,\tilde{L}_i)$ depend on $a_i$ through \eqref{eq:tilde_definitions}. \hfill $\square$
    \end{corollary}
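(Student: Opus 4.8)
The plan is to reduce Corollary~\ref{corr:RCI_proposition_new} to a direct combination of Proposition~\ref{prop:BP_result} and the vertex-based RCI characterization already established in Proposition~\ref{prop:RCI_proposition}. The structure of $\tilde{\mathbb{S}}$ is essentially that of $\mathbb{S}$ with the fixed vertices $(A_i,B_i,L_i)$ replaced by the tightened vertices $(\tilde{A}_i(a),\tilde{B}_i(a),\tilde{L}_i(a))$, so the argument is: (i) show that membership in $\tilde{\mathbb{S}}$ implies $X(q)$ is RCI for the \emph{tightened} uncertain LTI system~\eqref{eq:uncertain_tighter} by mimicking the proof of Proposition~\ref{prop:RCI_proposition}; (ii) invoke Proposition~\ref{prop:BP_result} to conclude that $X(q)$ then verifies the nonlinear inclusion~\eqref{eq:nonlinear_RCI} for the true system~\eqref{eq:uncertain_model}, using the lower bound $a_i \le p_i(z)$ on $X(q)$ guaranteed by the quantifier ``$\exists a \in [0,p(z)],\forall z \in X(q)$'' in the definition of $\tilde{\mathbb{S}}$.

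First I would spell out the vertex argument of step (i). Fix $(q,v)\in\tilde{\mathbb{S}}$ with the associated $a$. For any $z\in X(q)$ and any $(A,B,L)\in\tilde\Delta$, write $(A,B,L)=\sum_{i=1}^{n_p}\lambda_i(\tilde A_i(a),\tilde B_i(a),\tilde L_i(a))$ with $\lambda\in\mathcal{P}$, and expand $z=\sum_{j=1}^{\ve}\gamma_j V_j q$ with $\gamma\in\mathcal{P}$ using the configuration-constraint property $q\in\mathcal{C}\Rightarrow X(q)=\mathrm{CH}\{V_jq\}$. The successor state under the vertex control law $\mu_{\mathrm c}$ defined by $v$ is a convex combination over $(i,j)$ of the points $\tilde A_i(a)V_jq+\tilde B_i(a)U_jv$, plus a disturbance term in $\tilde L_i(a)\mathbb{W}$; bounding the latter over the box $\mathbb{W}$ from~\eqref{eq:W_characterization} yields exactly the term $\tilde d_i(a)=F\tilde L_i(a)\mathrm c_{\mathrm w}+\kappa|F\tilde L_i(a)|\epsilon_{\mathrm w}$ after applying $F$. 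The inequality $F(\tilde A_i(a)V_jq+\tilde B_i(a)U_jv)+\tilde d_i(a)\le q$ for all $(i,j)$ then gives $Fz^+\le q$ by convexity, i.e.\ $z^+\in X(q)$; the constraints $U_jv\in\U$ give $\mu_{\mathrm c}(z)\in\U$ and the row $\Hy(CV_jq+\mathrm c_{\mathrm w})+\kappa|\Hy|\epsilon^{\mathrm w}\le\hy$ gives $CX(q)\oplus\mathbb{W}\subseteq\Y$. This is precisely the content of Proposition~\ref{prop:RCI_proposition} with $\Delta\to\tilde\Delta$, so I would phrase step (i) as ``apply Proposition~\ref{prop:RCI_proposition} to system~\eqref{eq:uncertain_tighter}'' rather than repeating the calculation.

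For step (ii), the key point is that the lower bound~\eqref{eq:a_lb_p} required by Proposition~\ref{prop:BP_result} holds: the quantifier in $\tilde{\mathbb{S}}$ demands a single $a\in[0,p(z)]$ valid for all $z\in X(q)$, which is exactly $0\le a_i\le p_i(z)$ for all $z\in X(q)$, $i\in\mathbb{I}_1^{n_p}$ (here one should note $[0,p(z)]$ is shorthand for the requirement $0\le a\le p(z)$ componentwise for every $z$, equivalently $a_i\le\min_{z\in X(q)}p_i(z)$). With this bound in hand and $X(q)$ RCI for~\eqref{eq:uncertain_tighter} from step (i), Proposition~\ref{prop:BP_result} immediately gives that $X(q)$ verifies the inclusion in~\eqref{eq:nonlinear_RCI}, and the output constraint $CX(q)\oplus\mathbb{W}\subseteq\Y$ was already shown; combining them yields the claim.

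The main obstacle, and the only non-mechanical point, is making the nested quantifier ``$\forall(i,j),\exists a\in[0,p(z)],\forall z\in X(q)$'' precise: the intended reading is that $a$ is chosen \emph{before} $z$ (so it is a single vector common to all vertices and all $i,j$), since otherwise~\eqref{eq:tilde_definitions} would not define fixed matrices $\tilde\Delta$ and Proposition~\ref{prop:BP_result} would not apply. I would therefore add a sentence clarifying that $\tilde{\mathbb{S}}$ is to be read as ``there exists $a$ with $0\le a\le p(z)$ for all $z\in X(q)$ such that, for all $(i,j)$, the displayed inequalities hold,'' and then the proof is the two-line composition described above. A secondary, purely cosmetic subtlety is the sign/absolute-value handling in passing from $\tilde L_i(a)\mathbb{W}$ to $\tilde d_i(a)$, identical to the derivation of $d_i$ in Proposition~\ref{prop:RCI_proposition}, so it can be cited rather than redone.
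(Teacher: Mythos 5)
Your proposal is correct and takes essentially the same route as the paper, whose one-line proof simply invokes Proposition~\ref{prop:RCI_proposition} with the vertices $(A_i,B_i,L_i)$ replaced by $(\tilde{A}_i(a),\tilde{B}_i(a),\tilde{L}_i(a))$ for any $a$ verifying~\eqref{eq:a_lb_p}; your step~(i) just spells out that vertex argument explicitly, and your clarification of the quantifier on $a$ matches the intended reading. Note only that your step~(ii), invoking Proposition~\ref{prop:BP_result} to reach the nonlinear inclusion~\eqref{eq:nonlinear_RCI}, goes beyond the corollary's stated claim (RCI for the tightened LTI system~\eqref{eq:uncertain_tighter}) and is not needed for the proof itself.
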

    \begin{proof}
        The proof follows from Proposition \ref{prop:RCI_proposition}, after observing that $\mathbb{S}$ is RCI for \eqref{eq:uncertain_tighter} for any $a$ verifying \eqref{eq:a_lb_p}, with $(A_i,B_i,L_i)$ replaced by $(\tilde{A}_i(a),\tilde{B}_i(a),\tilde{L}_i(a))$.
    \end{proof}
    
    Using $\tilde{\mathbb{S}}$, we formulate the regularization function as
    \begin{align}
    \label{eq:new_r}
        \mathrm{r}(\Theta):=\inf_{(q,v) \in \tilde{\mathbb{S}}} \ \mathrm{d}(\overline{A},\overline{B},C,q),
    \end{align}
    where $(\overline{A},\overline{B})$ are mean values of $(\tilde{A}_i(a),\tilde{B}_i(a)),i \in \mathbb{I}_1^{n_p}$. We always have $\mathbb{S} \subseteq \tilde{\mathbb{S}}$, since $\tilde{\mathbb{S}}$ formulated with feasible value $a=0$ equals $\mathbb{S}$. Hence, \eqref{eq:new_r} is less conservative than \eqref{eq:old_r}. Unfortunately, $\tilde{\mathbb{S}}$ is no longer a polytope since it includes the robust nonlinear constraint \eqref{eq:a_lb_p}, such that Problem \eqref{eq:new_r} is a nonlinear robust optimization problem instead of a QP. We now develop an algorithm to solve Problem \eqref{eq:new_r}. 

    \subsection{Solving Problem \eqref{eq:new_r}}
    We use the following observation to solve Problem \eqref{eq:new_r}: Given sets $X_1$ and $X_2$ such that $X_1 \subseteq X_2$, if $p_i(z) \geq a_i$ holds for all $z \in X_2$, then $p_i(z) \geq a_i$ follows for all $z \in X_1$. To exploit this observation, given $q \in \mathcal{C}$ we define the set
    \begin{align}
		\label{eq:bounding_box}
		\mathbb{B}(q):=\left\{x : |x_i - \mu_i(q)| \leq \sigma_i(q)+\zeta, i \in \mathbb{I}_1^{n_x}\right\},
	\end{align}
	where we define $\mu_i(q):=\frac{1}{2}(\max_{j} V_{j,i}q+\min_{j} V_{j,i}q)$ and $\sigma_i(q):=\frac{1}{2}(\max_{j} V_{j,i}q-\min_{j} V_{j,i}q)$,
	with $V_{j,i}$ denoting row $i$ of $V_j$, and $\zeta > 0$. The set $\mathbb{B}(q)$ is a bounding box of $X(q)$ inflated by $\zeta>0$. Then, we define the polytope
    \begin{align*}
           \hat{\mathbb{S}}(\tilde{q},a):=\scalemath{0.98}{\left\{\begin{pmatrix} q \\ v \end{pmatrix} \middle|  \
           \begin{matrix*}[l] 
           \forall (i,j) \in \mathbb{I}_1^{n_p} \times \mathbb{I}_1^{\ve}, \ X(q) \subseteq \mathbb{B}(\tilde{q}), \\
            F(\tilde{A}_i(a) V_j q + \tilde{B}_i(a) U_j v)+\tilde{d}_i(a) \leq q, \\
            \Hy (CV_jq+\mathrm{c}_{\mathrm{w}})+\kappa |\Hy| \epsilon^{\mathrm{w}} \leq \hy,  \\
            U_j v \in \U, Eq \leq 0
           \end{matrix*} \right\}},
       \end{align*} 
       where the parameters $a \in \R^{n_p}$ depend on $\tilde{q}$ as
       \begin{align}
           \label{eq:a_lb_2}
			0 \leq a_i \leq p_i(z), && \forall z \in \mathbb{B}(\tilde{q}), i \in \mathbb{I}_1^{n_p}.
       \end{align}
        The polytope $\hat{\mathbb{S}}(\tilde{q},a)$ is formulated by replacing condition \eqref{eq:a_lb_p} over $a$ in $\tilde{\mathbb{S}}$ with $X(q) \subseteq \mathbb{B}(\tilde{q})$. Since \eqref{eq:a_lb_2} implies $a \in [0,p(z)]$ for all $z \in X(q)$, it follows that $\hat{\mathbb{S}}(\tilde{q},a) \subseteq \tilde{\mathbb{S}}$. If $\tilde{q}$ is high enough such that the constraint $X(q) \subseteq \mathbb{B}(\tilde{q})$ is inactive for all feasible $q$, the inclusions $\mathbb{S} = \hat{\mathbb{S}}(\tilde{q},0) \subseteq \hat{\mathbb{S}}(\tilde{q},a) \subseteq \tilde{\mathbb{S}} $
        follow for any $a$ verifying \eqref{eq:a_lb_2}, which implies that an RCI set computed by optimizing over $\hat{\mathbb{S}}(\tilde{q},a)$ is less conservative than that obtained by optimizing over $\mathbb{S}$.
        Utilizing $\hat{\mathbb{S}}(\tilde{q},a)$, we formulate Algorithm \ref{alg:RCI} to solve Problem \eqref{eq:new_r}. Starting from some $q_0 \in \mathcal{C}$, we compute bounds $a$ verifying \eqref{eq:a_lb_2} using a $\mathrm{BoundProp}$ methodology presented next. Then, we compute $(\tilde{A}_i(a),\tilde{B}_i(a),\tilde{L}_i(a))$ used to define a QP in Step 3 that optimizes over $\hat{\mathbb{S}}(q_k,a)$ constructed using $\mathbb{B}(q_k)$. Then, updating $q_k$ to $q_{k+1}$, we repeat the procedure for $\hat{k}$ number of steps. The key feature of Algorithm \ref{alg:RCI} is that the output $\mathbf{r}_{\hat{k}}$ is differentiable in $\Theta$, such that it can be plugged into a gradient-based solver to tackle Problem \eqref{eq:concurrent_sysID}. Future research can study the theoretical properties of this algorithm.
    \begin{algorithm}[t]
	\caption{Evaluate $\mathbf{r}(\Theta;q_0)$}
	\label{alg:RCI}
	\begin{algorithmic}[1]
		\Require $\Theta$, $(\mathrm{c}_{\mathrm{w}},\epsilon_{\mathrm{w}})$, $q_0 \in \mathcal{C}$, $\hat{k}>0$ 
		\State $\textbf{For} \ k = 0,1,2,\ldots,\hat{k}-1 \ \textbf{do}$
		\State $\quad \scalemath{0.9}{a \gets \mathrm{BoundProp}(q_k,\Theta)}$, evaluate $\scalemath{0.9}{(\tilde{A}_i(a),\tilde{B}_i(a),\tilde{L}_i(a))}$
		\State $\quad q_{k+1},v_{k+1} \gets \arg\inf_{(q,v) \in \hat{\mathbb{S}}(q_k,a)} \mathrm{d}(\overline{A},\overline{B},C,q) $
		\State $\quad \mathbf{r}_{k+1} \gets \mathrm{d}(\overline{A},\overline{B},C,q_{k+1})$
		\State \Return $\mathbf{r}_{\hat{k}}, q_{\hat{k}}$
	\end{algorithmic}
\end{algorithm}
\subsection{Interval bound propagation}
We use interval bound propagation (IBP)~\cite{gowal2018effectiveness} to compute $a$ verifying \eqref{eq:a_lb_2} in Step 3 of Algorithm \ref{alg:RCI}, assuming that the FNNs $\mathcal{N}(\cdot;\theta_i)$ are defined using monotonic activations.
While IBP computes $a$ exploiting this monotonicity, alternative verification approaches can be used to handle nonmonotonic activations,  see, e.g.,~\cite{wang2021betacrownefficientboundpropagation, fazlyab2021safetyverificationrobustnessanalysis,Paulsen2022}. However, they might be computationally expensive. The IBP approach, while limited to monotonic activations, is computationally inexpensive. It is formed by observing that the interval $z \in [\munderbar{z},\bar{z}]$ projected through the linear map $h(z) = Wz+b$ results in $h(z) \in [W\mu+b-|W|\Sigma, W\mu+b+|W|\Sigma]$
with $\mu = 0.5(\bar{z}+\munderbar{z})$ and $\Sigma = 0.5(\bar{z}-\munderbar{z})$, while through a monotonically increasing nonlinear map $h(z)$ results in $h(z) \in [h(\munderbar{z}),h(\bar{z})].$
Defining $\mathbf{N}(z;\theta_i):=e^{\mathcal{N}(z;\theta_i)}$, a composition of the propagations can be used to compute the bounds
\begin{align}
	\label{eq:before_softmax}
	\mathbf{N}(z;\theta_i) \in [\munderbar{\mathbf{N}}_i,\overline{\mathbf{N}}_i], && \forall z \in \mathbb{B}(q).
\end{align}
\begin{proposition}
	\label{prop:softmax_BP}
    Given $q \in \mathcal{C}$, suppose~\eqref{eq:before_softmax} holds. Then,
	\begin{align*}
		a_i:=\cfrac{\munderbar{\mathbf{N}}_i}{\left(\munderbar{\mathbf{N}}_i + \sum_{j \in \mathbb{I}_1^{n_p} \setminus i} \overline{\mathbf{N}}_j\right)}
	\end{align*}
    verifies \eqref{eq:a_lb_2} with $\tilde{q}=q$ for all $i \in \mathbb{I}_1^{n_p}$.
\end{proposition}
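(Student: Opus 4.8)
The plan is to make the softmax structure explicit and then exploit monotonicity of a scalar rational function. From~\eqref{eq:softmax} together with the definition $\mathbf{N}(z;\theta_j)=e^{\mathcal{N}(z;\theta_j)}$, for every $z$ and every $i \in \mathbb{I}_1^{n_p}$ we have $p_i(z)=\mathbf{N}(z;\theta_i)/\sum_{j=1}^{n_p}\mathbf{N}(z;\theta_j)$, which is strictly positive since each $\mathbf{N}(z;\theta_j)>0$; in particular the IBP bounds in~\eqref{eq:before_softmax} satisfy $\munderbar{\mathbf{N}}_i>0$ because they are obtained by pushing an interval through the positive map $e^{(\cdot)}$. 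Writing $S_i(z):=\sum_{j\in\mathbb{I}_1^{n_p}\setminus i}\mathbf{N}(z;\theta_j)\ge 0$, this reads $p_i(z)=\mathbf{N}(z;\theta_i)/(\mathbf{N}(z;\theta_i)+S_i(z))$. First I would record two elementary monotonicity facts: the map $t\mapsto t/(t+S)$ is nondecreasing on $t\ge 0$ for fixed $S\ge 0$ (its derivative is $S/(t+S)^2\ge 0$), and the map $S\mapsto t/(t+S)$ is nonincreasing on $S\ge 0$ for fixed $t>0$.

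Next, I would invoke the hypothesis~\eqref{eq:before_softmax}, which holds for all $z\in\mathbb{B}(q)$: it gives $\mathbf{N}(z;\theta_i)\ge\munderbar{\mathbf{N}}_i>0$ and $\mathbf{N}(z;\theta_j)\le\overline{\mathbf{N}}_j$ for $j\ne i$, hence $S_i(z)\le\sum_{j\ne i}\overline{\mathbf{N}}_j$. Applying the first monotonicity fact with $S=S_i(z)$ held fixed yields $p_i(z)\ge\munderbar{\mathbf{N}}_i/(\munderbar{\mathbf{N}}_i+S_i(z))$, and applying the second with $t=\munderbar{\mathbf{N}}_i$ held fixed yields $\munderbar{\mathbf{N}}_i/(\munderbar{\mathbf{N}}_i+S_i(z))\ge\munderbar{\mathbf{N}}_i/(\munderbar{\mathbf{N}}_i+\sum_{j\ne i}\overline{\mathbf{N}}_j)=a_i$. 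Chaining the two inequalities gives $p_i(z)\ge a_i$ for all $z\in\mathbb{B}(q)$ and all $i\in\mathbb{I}_1^{n_p}$; nonnegativity $a_i\ge 0$ is immediate from positivity of the quantities involved. Since~\eqref{eq:a_lb_2} with $\tilde q=q$ is precisely the assertion $0\le a_i\le p_i(z)$ for all $z\in\mathbb{B}(q)$ and all $i$, this completes the argument.

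The only real subtlety — and the step I would be most careful about — is that $\mathbf{N}(z;\theta_i)$ and all the terms of $S_i(z)$ depend on the \emph{same} $z$, so one cannot independently ``minimize the numerator and maximize the denominator'' of $p_i(z)$; the two-stage monotonicity argument above is exactly what circumvents this, replacing first the numerator (at the current denominator) and then the denominator (at the already-replaced numerator). Everything else is routine. As a side remark worth including, summing $a_i\le p_i(z)$ over $i\in\mathbb{I}_1^{n_p}$ for any fixed $z\in\mathbb{B}(q)$ gives $\sum_{i}a_i\le\sum_i p_i(z)=1$, so the $a$ produced here automatically satisfies the consistency property $\sum_j a_j\le 1$ implicitly used in the proof of Proposition~\ref{prop:BP_result}.
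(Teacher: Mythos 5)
Your proof is correct and follows essentially the same route as the paper: the paper simply asserts that the minimizer of $x/(x+y)$ over a box $[\munderbar{x},\bar{x}]\times[\munderbar{y},\bar{y}]$ with nonnegative bounds is $(\munderbar{x},\bar{y})$, and your two-stage monotonicity argument is exactly a proof of that fact applied to $p_i(z)=\mathbf{N}(z;\theta_i)/(\mathbf{N}(z;\theta_i)+S_i(z))$, with the coupling in $z$ handled correctly. Your closing remark that $\sum_i a_i \le 1$ follows automatically is a nice observation consistent with the hypothesis used in Proposition~\ref{prop:BP_result}.
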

\begin{proof}
	We observe that~\eqref{eq:before_softmax} satisfies the inequalities $0 \leq \munderbar{\mathbf{N}}_i \leq \overline{\mathbf{N}}_i$, and the optimizer of $\min_{x,y} \ \frac{x}{x+y} \ \text{s.t.} \ (x,y) \in [\munderbar{x},\bar{x}] \times [\munderbar{y},\bar{y}]$
	with $0\leq \munderbar{x} \leq \bar{x}$ and $0 \leq \munderbar{y}\leq \bar{y}$ is $(\munderbar{x},\bar{y})$.
\end{proof}

\begin{algorithm}[t]
	\caption{Solve Problem~\eqref{eq:concurrent_sysID}}
	\label{alg:concurrent}
	\begin{algorithmic}[1]
		\Require $(\mathcal{D}, \mathcal{D}^{\mathrm{w}})$, $(F,V,E)$, $\kappa >1$, $\zeta>0$,  $\hat{k},\hat{l}>0$, Initial parameters $\Theta_0,x_{0,0}$, $q_0 \in \mathcal{C}$.
		\State $\textbf{For} \ l = 0,1,2,\ldots,\hat{l}-1 \ \textbf{do}$
		\State $\quad \Theta_{l+1},x_{0,l+1} \leftarrow \mathrm{Optimizer}(\nabla \mathcal{J}(\Theta_l,x_{0,l};q_l))$
		\State $\quad \mathbf{r}_{l+1},q_{l+1} \leftarrow \mathbf{r}(\Theta_{l+1};q_l)$ 
		\State \Return $\Theta_l$
	\end{algorithmic}
\end{algorithm}

\section{Concurrent synthesis algorithm}
\label{sec:con_synthesis}
We now develop Algorithm~\ref{alg:concurrent} to solve Problem~\eqref{eq:concurrent_sysID}, in which we utilize the fact that Algorithm~\ref{alg:RCI} is parametric in $q_0$. Towards its development, for a given $\tilde{q} \in \mathcal{C}$, we define
\begin{align*}
	\scalemath{0.9}{\mathcal{J}(\Theta,x_0;\tilde{q}):=\frac{1}{N}\sum_{t=0}^{N-1} \|y_t - Cx_t\|_2^2 + \tau \mathbf{r}(\Theta;\tilde{q}),}
\end{align*}
where $\mathbf{r}(\Theta;\tilde{q})$ is evaluated using Algorithm~\ref{alg:RCI}. We use this function as an alias for the objective of Problem~\eqref{eq:concurrent_sysID}. 
We initialize the algorithm with $q_0 \in \mathcal{C}$ being the optimizer of Problem \eqref{eq:old_r} with $\Theta=\Theta_0$. 
In Step 3, with the last updated RCI set parameter $q_l$, we compute the gradients of $\mathcal{J}(\Theta,x_0;q_l)$, that we pass to an optimizer such as Adam~\cite{kingma2017} to compute $\Theta_{l+1}$ and $x_{0,l+1}$. Then, we update $q_{l+1}$ using the updated model parameters $\Theta_{l+1}$ starting from $q_l$ using Algorithm~\ref{alg:RCI}. To compute the initial $\Theta_0$ and configuration triplet $(F,V,E)$, we use the approach in~\cite[Section IV]{Mulagaleti2024qlpv}.
\section{Numerical example}
\label{sec:example}
We consider data from an oscillator with dynamics given by $1.5 \ddot{y}+\dot{y}+y+1000 y^3=u,$
with input $u \in [-0.5,0.5]$[N] the force applied, and output $y$ [m] the position. While we focus on the benefits of using the approach of this paper for concurrent synthesis, we refer to \cite{Mulagaleti2024qlpv} for insights regarding the qLPV parameterization. We implement Algorithm \ref{alg:concurrent} using \url{jax} \cite{jax2018github}, and
utilize the differentiable QP solver \url{qpax} \cite{tracy2024differentiability} to implement of Algorithm \ref{alg:RCI}. We evaluate the quality of a model using the Best Fit Rate (BFR) score~\cite[Section 3.3]{Bem25}\footnote{Code available on \url{github.com/samku/Con-qLPV}}. 
\subsubsection{Concurrent identification}
Using randomly sampled inputs in $[-0.5,0.5]$, we build the training dataset $\mathcal{D}$ with $10000$ points, disturbance dataset $\mathcal{D}^{\mathrm{w}}$ with $2000$ points, and test dataset with $10000$ points sampled at $0.1$s time intervals. We parameterize~\eqref{eq:qLPV_model} with $n_x=2$, $n_p=6$, and a single-hidden-layer FNN $\mathcal{N}(x;\theta_i)$ with $3$ monotonic activation units $\mathrm{elu}(x)+1$. We compute an initial qLPV model parameterized as \eqref{eq:qLPV_model} using the \url{jax-sysid}~\cite{Bem25} toolbox on $\mathcal{D}$, which achieves BFR scores of $85.887$ on $\mathcal{D}$, $85.97038$ on $\mathcal{D}^{\mathrm{w}}$, and $86.8900$ on the test set.  We use this model to define $\Theta_0$, in which we set each observer gain $L_i=0$. Using \cite[Proposition 5]{Mulagaleti2024qlpv}, we compute a feasible triplet \((F,V,E)\) with $\fe=\ve=4$ for $X(q)$. We select $M=5$ in \eqref{eq:quality_function}, $\tau=0.0005$ in \eqref{eq:concurrent_sysID}, $\kappa=1.01$ in \eqref{eq:W_characterization}, and compute $q_0$ by solving Problem \eqref{eq:old_r} to initialize Algorithm~\ref{alg:concurrent}. 
We consider the following benchmarks: ($i$) \textbf{\textit{Sequential model-based synthesis}}: We compute the parameters of \eqref{eq:qLPV_model} starting from $\Theta_0$ using the \url{jax-sysid} toolbox, then utilize $\mathcal{D}^{\mathrm{w}}$ to identify $\mathbb{W}$ and compute the maximal RCI set \cite{Bertsekas1972} for \eqref{eq:original_uncertain} inside the tightened output constraint set $Cx \in \mathbb{Y} \ominus \mathbb{W}$. Then, selecting $(F,q)$ such that $\{x \mid Fx \leq q\}$ is the maximal RCI set, we evaluate $\mathrm{d}_{\mathrm{seq}}:=\mathrm{d}(\overline{A},\overline{B},C,q)$ defined in \eqref{eq:quality_function}. The identified model achieves BFR scores of $90.7979$ on $\mathcal{D}$, $92.3137$ on $\mathcal{D}^{\mathrm{w}}$, and $90.8584$ on the test set, along with $\mathrm{d}_{\mathrm{seq}}=22.6924$.
($ii$) \textbf{\textit{Baseline concurrent synthesis}}: We follow the approach of \cite{Mulagaleti2024qlpv} to formulate the control-oriented regularization as \eqref{eq:old_r}. At the solution, we re-solve \eqref{eq:old_r}, and denote its optimal value as $\mathrm{d}_{\mathrm{base}}$. The identified model achieves BFR scores of $90.6734$ on $\mathcal{D}$, $92.0751$ on $\mathcal{D}^{\mathrm{w}}$, and $90.6593$ on the test set, along with $\mathrm{d}_{\mathrm{base}}=5.0936$ indicating the benefits of utilizing concurrent synthesis for reducing RCI set conservativeness. 

To compare against these benchmarks, we simulate Algorithm \ref{alg:concurrent} with $\zeta$ uniformly spaced in $[0.01,0.1]$, and fix $\hat{k}=1$ such that we perform one iteration of Algorithm \ref{alg:RCI} per iteration of Algorithm \ref{alg:concurrent}. We report that while $\hat{k}>1$ can be chosen to simulate Algorithm \ref{alg:RCI}, it often results in Algorithm \ref{alg:concurrent} converging to suboptimal points. A study of escaping such minima is a subject of future study. Using the model parameters $\Theta_{\zeta}$ computed by Algorithm \ref{alg:concurrent} for given $\zeta$, we recompute the largest RCI set by solving Problem \eqref{eq:new_r} with Algorithm \ref{alg:RCI} with $q_0$ as the optimizer of Problem \eqref{eq:old_r}. We denote the output of Algorithm \ref{alg:RCI} as $\mathrm{d}_{\zeta}=\mathbf{r}_{\hat{k}}$, with $\hat{k}=200$. We report no reduction in $\mathbf{r}$ when Algorithm \ref{alg:RCI} is applied to the models identified using the benchmark approaches, since these models were not optimized for bound propagation.
In Figure \ref{fig:zeta_variation}, we plot $\mathrm{d}_{\zeta}$ for different values of $\zeta$. We observe a reduction in conservativeness, indicated by smaller values than $\mathrm{d}_{\mathrm{seq}}$ and $\mathrm{d}_{\mathrm{base}}$, validating our approach to compute models that admit RCI sets with reduced conservativeness. We report that for all models $\Theta_{\zeta}$, we obtain BFR scores in the interval $[90.7303,90.7371]$ over $\mathcal{D}$, $[92.4489,92.4633]$ over $\mathcal{D}^{\mathrm{w}}$, and $[90.8314,90.8553]$ over the test set, indicating that a reduction in conservativeness without degrading the model quality. In Figure \ref{fig:alg_1}, we show the value of $\mathbf{r}_k$ over iterations of Algorithm \ref{alg:RCI} with $\Theta=\Theta_{0.07}$ and $\zeta=0.07$, in which we observe monotonic convergence.
\begin{figure}[t]
	\centering
	\includegraphics[width=0.85\linewidth, clip=true]{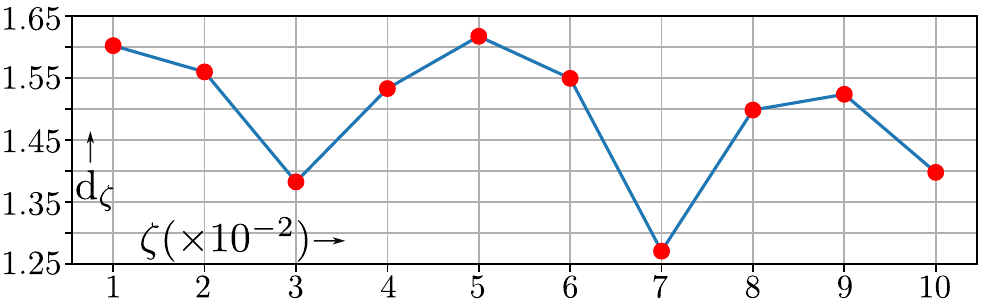}
	\captionsetup{width=\linewidth,justification=justified, singlelinecheck=false}
	\caption{Output of Algorithm \ref{alg:RCI} for different $\zeta$, simulated using models $\Theta_{\zeta}$ identified by Algorithm \ref{alg:concurrent}. Observe that for chosen $\zeta$, we obtain $\mathrm{d}_{\zeta}$ significantly lesser than $\mathrm{d}_{\mathrm{seq}}=22.6924$ and $\mathrm{d}_{\mathrm{base}}=5.0936$.}
	\label{fig:zeta_variation}
\end{figure}
\begin{figure}[t]
	\centering
	\includegraphics[width=0.85\linewidth, clip=true]{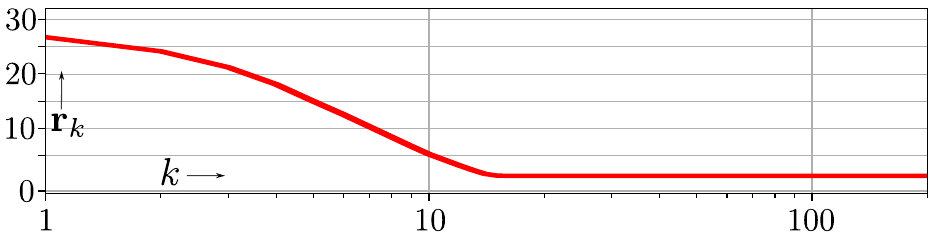}
	\captionsetup{width=\linewidth,justification=justified, singlelinecheck=false}
	\caption{Iterations of Algorithm \ref{alg:RCI} for $\Theta=\Theta_{0.07}$ and $\zeta=0.07.$}
	\label{fig:alg_1}
\end{figure}
\begin{figure}
	\centering
	\includegraphics[width=0.88\linewidth, clip=true]{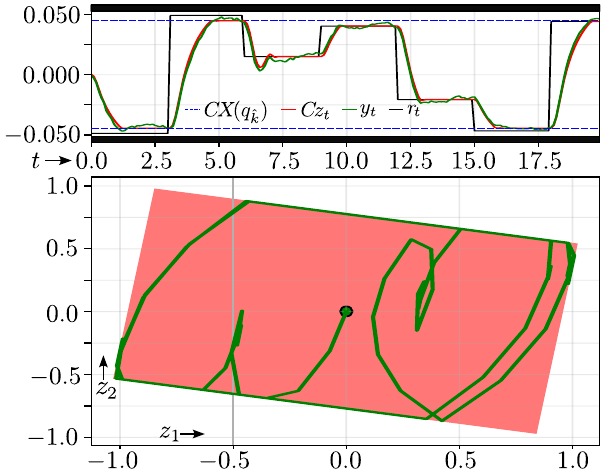}
	\captionsetup{width=\linewidth,justification=justified, singlelinecheck=false}
	\caption{Closed-loop trajectories using tracking controller \eqref{eq:tracking_controller}. The black region in top figure denotes boundaries of $Y$.}
	\label{fig:tracking_controller}
\end{figure}
\subsubsection{Controller synthesis} We use the parameters $\Theta_{0.07}$ for controller synthesis. While any robust controller that regulates \eqref{eq:nonlinear_RCI} (or \eqref{eq:uncertain_tighter}) in $X(q_{\hat{k}})$ can be used, we consider a simple tracking controller formulated as the QP
\begin{align}
	\label{eq:tracking_controller}
	&u(z,y,r) = \arg\min_{u \in U} \ \scalemath{0.9}{\|Cz^+ - r\|_2^2} \\
	&\ \ \text{s.t.}  \ \ \scalemath{0.9}{z^+ = A(p(z))z+B(p(z))u+L(p(z))(y-Cz) \in X(q_{\hat{k}}),} \nonumber 
\end{align}
which consumes the current state $z=z_t$ of~\eqref{eq:state_observer} and current output $y=y_t$ of the plant, and output reference $r=r_t$.  For sufficiently large values of $\kappa>1$ in \eqref{eq:W_characterization}, Problem \eqref{eq:tracking_controller} is recursively feasible. A study of stability properties, which involves the synthesis of ISS-Lyapunov functions to quantify the effect of $w=y-Cx$ on the closed-loop performance, are a subject of future research. Note that $(u,z^+)$ can be penalized for controller tuning.
In Figure~\ref{fig:tracking_controller}, we plot closed-loop trajectories with piecewise constant references obtained by solving~\eqref{eq:tracking_controller}. In Figure~\ref{fig:tracking_controller} (top plot), trajectories in the output space are plotted. Observe that the plant output attempts to track the reference signal $\mathrm{r}_t$ while satisfying $y_t \in Y$. Also plotted are the model output $Cz_t \in CX(q_{\hat{k}})$. In Figure~\ref{fig:tracking_controller} (bottom plot), the RCI set $X(q_{\hat{k}})$ is plotted,  with the state trajectory $z_t \in X(q_{\hat{k}})$ (in green) of the system in~\eqref{eq:state_observer}.

\section{Conclusions}
\label{sec:conclusions}
We extended the concurrent synthesis approach of \cite{Mulagaleti2024qlpv} to identify qLPV models with control synthesis guarantees through the introduction of a novel control-oriented regularization function. In Proposition \ref{prop:BP_result}, we derived conditions on existence of RCI sets based on a linear system with tightened multiplicative uncertainty, and in Corollary \ref{corr:RCI_proposition_new}, we derived a new set of configuration-constrained RCI set parameters that we used to formulate the new regularization function. We then developed Algorithm \ref{alg:RCI} to evaluate the function. Our numerical example demonstrates reduced conservativeness compared to benchmark approaches. Future research will focus on $a$) analyzing Algorithm \ref{alg:RCI}; $b$) efficient approaches to estimate $a$ verifying \eqref{eq:a_lb_2} avoiding monotonicity assumptions; and $c$) using the framework to for real-world systems.

\bibliographystyle{ieeetr} 
\bibliography{manuscript_references} 

\end{document}